\documentclass[10pt]{amsart}
\usepackage{amssymb,mathtools}
\usepackage{microtype}
\usepackage[hidelinks]{hyperref}

\theoremstyle{plain}
\newtheorem{theorem}{Theorem}[section]
\newtheorem{lemma}[theorem]{Lemma}
\newtheorem{corollary}[theorem]{Corollary}
\theoremstyle{remark}
\newtheorem{remark}[theorem]{Remark}
\numberwithin{equation}{section}

\newcommand{\C}{\mathbb C}
\newcommand{\E}{\mathbb E}
\newcommand{\Var}{\operatorname{Var}}
\newcommand{\cum}{\operatorname{cum}}
\newcommand{\diam}{\operatorname{diam}}
\newcommand{\Li}{\operatorname{Li}_2}
\newcommand{\partitions}{\mathcal P}

\begin{document}

\title[Higher-order sum rules for planar GEF zeros]
{Higher-order sum rules for zeros of the planar Gaussian entire function}

\author{Xiaolin Zhu}
\address{Taiyuan Normal University, Taiyuan, P. R. China}
\email{xiaolinzhu@tynu.edu.cn}
\thanks{X. Zhu is supported by the Tianyuan Fund for Mathematics of the National Natural Science Foundation of China (Grant No. 12526614).}

\author{Feng Guo}
\address{School of Mathematics, Nanjing University of Aeronautics and
Astronautics, Nanjing 211106, P. R. China}
\email{70207994@nuaa.edu.cn}
\thanks{Feng Guo is the corresponding author.}

\subjclass[2020]{Primary 60G55; Secondary 60G15, 30B20}
\keywords{Gaussian entire function, truncated correlations, sum rules, cumulants}
\date{}
\hypersetup{pdftitle={Higher-order sum rules for zeros of the planar Gaussian entire function},pdfauthor={Xiaolin Zhu and Feng Guo}}

\begin{abstract}
For each fixed integer $k\ge2$, we prove that integrating one
coordinate of the truncated correlation of order $k$ for zeros
of the planar Gaussian entire function (GEF) gives $-(k-1)$ times
the truncated correlation of order $k-1$. The integral converges
absolutely, and the identity holds pointwise, including at
coincident fixed coordinates by continuous extension. This answers the marginal-identity part
of Krishnapur and Yogeshwaran's question for the planar GEF.
We also prove a general criterion for simple point processes:
under local moment and absolute integrability assumptions, the
identities hold almost everywhere if there are compactly supported
cutoffs between zero and one, eventually equal to one on each
compact set, whose linear statistics have variances tending to zero.
For GEF zeros, strong clustering and variance bounds for smooth
linear statistics verify these assumptions. The
proof combines a mixed-cumulant estimate with the partition formula
for ordinary and factorial cumulants.
\end{abstract}

\maketitle

\section{Introduction}\label{sec:intro}
For zeros of the planar Gaussian entire function (GEF),
Krishnapur and Yogeshwaran asked whether integrating one
coordinate of the truncated correlation of order $k$ gives
exactly $-(k-1)$ times the truncated correlation of order
$k-1$ \cite[Assumption~4.2 and Example~4.9]{KY}.

For a stationary point process of intensity $\lambda$ and truncated
pair density $\rho_2^T$, the order-two recursion requires the
hyperuniformity sum rule
\[
 \int\rho_2^T(0,w)\,dw=-\lambda.
\]
For planar GEF zeros, this identity follows from the pair-correlation
calculation of Forrester and Honner \cite[Section~4.1]{FH}.
Nazarov and Sodin subsequently proved strong clustering at every
fixed order \cite[Theorem~1.4]{NS}, yielding absolute integrability
of the higher truncated correlations \cite[Claims~4.1--4.2]{NS}.
Thus the higher marginal integrals were known to converge; the
question in \cite[Example~4.9]{KY} concerns their exact values for
integer orders $k\ge3$.

Theorem~\ref{thm:GEF} proves the one-coordinate marginal identity
for every fixed integer $k\ge2$. The integral converges absolutely, and the identity
holds pointwise, including at coincident fixed coordinates by
continuous extension.
The question in \cite[Example~4.9]{KY}, first posted in 2024,
also asks about a local identity for logarithmic cumulants and
more general random analytic functions.

We also prove a criterion for simple point processes
(Theorem~\ref{thm:abstract}), giving the marginal identities almost
everywhere when the stated local moment, absolute integrability
and cutoff assumptions hold. The proof of the criterion requires
neither stationarity nor a Gaussian structure. We apply it to GEF zeros.

The marginal identities express cancellations needed to estimate
higher cumulants of linear statistics. In the ordinary
cumulant expansion, lower-order truncated correlations arise from
tests evaluated at the same point. The marginal identities combine
these terms into a single integral against the highest-order
truncated correlation, with a test factor built from products of
differences \cite[Proposition~4.12]{KY}. This factor vanishes on the
full diagonal, while clustering controls the correlation away from
that diagonal. Theorem~\ref{thm:GEF} therefore verifies the marginal
hypothesis in the GEF application of \cite[Example~4.10]{KY},
subject to the remaining hypotheses of
\cite[Theorems~4.3 and~4.5]{KY}. Central limit theorems for smooth
linear statistics of GEF zeros were proved by
Sodin and Tsirelson \cite{ST}; Nazarov and Sodin \cite{NSsmooth}
extended them to broader classes of test functions under the
conditions stated there.

\subsection{Definitions and main results}
Throughout the paper \(dA=dx\,dy\) on \(\C\), and
\(\Delta=\partial_x^2+\partial_y^2\).
A centered complex Gaussian vector is called proper if all its
pseudo-covariances $\E\eta_i\eta_j$ vanish.
Let \(\xi_n\) be independent centered proper complex Gaussian variables,
each with density \(\pi^{-1}e^{-|u|^2}\). Define
\[
 F(z)=\sum_{n=0}^{\infty}\xi_n\frac{z^n}{\sqrt{n!}}.
\]
All finite vectors of values and derivatives of $F$ are centered and
jointly proper.
Write \(\mathcal Z\) for its zero counting measure.
For a locally finite point measure $X$ and a bounded measurable
compactly supported function $h$, write $X(h)=\int h\,dX$.
We say that a point process $X$ has all local moments if
$\E X(K)^p<\infty$ for every compact set $K$ and every integer $p\ge1$.
We define the symmetric factorial correlation density \(\rho_j\) by
\[
 \E\sum_{(u_1,\ldots,u_j)\in\mathcal Z^j_{\ne}}
 g(u_1,\ldots,u_j)
 =
 \int_{\C^j}g(z_1,\ldots,z_j)\rho_j(z_1,\ldots,z_j)
 \prod_{i=1}^j dA(z_i).
\]
Here $g$ is nonnegative, measurable and compactly supported, and the sum
is over ordered tuples of distinct zeros. Existence of the densities and
their continuous extensions is proved in Lemma~\ref{lem:local}.
For \(j\ge1\), set \([j]=\{1,\ldots,j\}\) and let \(\partitions_j\)
denote the set of partitions of this set. For \(\pi\in\partitions_j\),
write \(|\pi|\) for its number of blocks. The truncated correlation
functions are defined by
\begin{equation}\label{eq:factorial-cumulants}
 \rho_j^T(z_1,\ldots,z_j)
 =\sum_{\pi\in\partitions_j}(-1)^{|\pi|-1}(|\pi|-1)!
       \prod_{B\in\pi}\rho_{|B|}(z_B).
\end{equation}
Here \(z_B\) is the tuple of coordinates indexed by \(B\).
In particular \(\rho_1^T=\rho_1\). At the next two orders,
\begin{align*}
 \rho_2^T(z_1,z_2)
 &=\rho_2(z_1,z_2)-\rho_1(z_1)\rho_1(z_2),\\
 \rho_3^T(z_1,z_2,z_3)
 &=\rho_3(z_1,z_2,z_3)
   -\sum_{\{i,j\}\subset[3]}\rho_2(z_i,z_j)\rho_1(z_\ell)
   +2\prod_{i=1}^3\rho_1(z_i),
\end{align*}
where $\ell$ is the index outside $\{i,j\}$ in each summand.
These truncated correlations are the factorial cumulant densities. Ordinary
cumulant measures of order two or higher also have contributions
supported on diagonals.
\begin{samepage}
\begin{theorem}\label{thm:GEF}
Let \(k\ge2\) be a fixed integer. For every tuple
\((z_1,\ldots,z_{k-1})\in\C^{k-1}\), the continuous factorial
cumulant densities of the planar GEF satisfy the following
identity, with an absolutely convergent integral:
\begin{equation}\label{eq:GEF-sum-rule}
 \displaystyle
 \int_\C\rho_k^T(z_1,\ldots,z_{k-1},w)\,dA(w)
 =-(k-1)\rho_{k-1}^T(z_1,\ldots,z_{k-1}).
\end{equation}
At coincident coordinates, the densities are defined by the
continuous extensions in Lemma~\ref{lem:local}.
\end{theorem}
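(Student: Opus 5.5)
The plan has two parts. First we derive the identity almost everywhere, using the abstract criterion (Theorem~\ref{thm:abstract}), which the abstract describes. Then we upgrade to every tuple, including coincident coordinates, by continuity.

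\textbf{Step 1: the a.e.\ identity.} The criterion needs three inputs. Local moments: by Lemma~\ref{lem:local}, $\mathcal Z$ has all local moments, and continuous densities $\rho_j$ exist for every $j$. Absolute integrability: this comes from the Nazarov--Sodin strong clustering theorem \cite[Theorem~1.4]{NS}. It gives $|\rho_k^T(z_1,\ldots,z_k)|\le C_k\exp(-c\,\diam(z_1,\ldots,z_k)^{2}/\log\ldots)$, or at least a rapid decay in the diameter, as in \cite[Claims~4.1--4.2]{NS}. Hence $\int|\rho_k^T(z_1,\ldots,z_{k-1},w)|\,dA(w)$ is locally bounded in $(z_1,\ldots,z_{k-1})$. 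Cutoffs: we take $h_R(z)=\chi(|z|/R)$ with $\chi$ smooth, equal to $1$ on $[0,1]$ and supported in $[0,2]$. By Edelman--Kostlan, $\E\mathcal Z(h)=\frac1{\pi}\int h\,dA$ and $\mathcal Z(h)=\frac1{2\pi}\int\Delta h\,\log|F|\,dA$. The standard variance formula for smooth statistics of GEF zeros (Sodin--Tsirelson) gives $\Var\mathcal Z(h)\asymp\|\Delta h\|_{L^2}^2$ up to constants. Since $\|\Delta h_R\|_{L^2}^2=O(R^{-4}\cdot R^2)=O(R^{-2})\to0$, the variances vanish. Theorem~\ref{thm:abstract} then yields \eqref{eq:GEF-sum-rule} for almost every $(z_1,\ldots,z_{k-1})$.

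\textbf{Step 2: upgrade to every tuple.} Both sides of \eqref{eq:GEF-sum-rule} are continuous functions of $(z_1,\ldots,z_{k-1})\in\C^{k-1}$. The right side is continuous by Lemma~\ref{lem:local}. For the left side, the integrand is continuous in all variables by Lemma~\ref{lem:local}. So by dominated convergence it suffices to have an integrable majorant in $w$ that is uniform for $(z_1,\ldots,z_{k-1})$ in a compact set $K$. This majorant must come from the clustering bound in a form valid uniformly, including near diagonals, where $\rho_k$ stays bounded on compacts. Concretely, when $|w|$ is large relative to $K$, the diameter of the tuple is at least $|w|-C_K$, so the Nazarov--Sodin bound gives $|\rho_k^T|\le C e^{-c(|w|-C_K)^{\alpha}}$ uniformly. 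On the bounded remaining region, the local boundedness of $\rho_k$ from Lemma~\ref{lem:local} suffices. Two continuous functions that agree a.e.\ agree everywhere. The same bound gives absolute convergence at every tuple.

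\textbf{Main obstacle.} I expect the hard point to be ensuring that the strong clustering estimate is stated for the continuous extensions of $\rho_k^T$ with a constant uniform up to coincident coordinates. The statement in \cite{NS} may be phrased for separated clusters of points or for bounded-away-from-diagonal configurations. If so, I would first try to combine their clustering for well-separated groups with the local boundedness of $\rho_j$ near diagonals from Lemma~\ref{lem:local}. The mechanism is that, after splitting the points into a cluster near $K$ and the far point $w$, $\rho_k^T$ is controlled by the decay of the covariance kernel $e^{-|z-w|^2/2}$ between groups. Checking the variance bound for the cutoffs is the other point needing care, but it is routine.
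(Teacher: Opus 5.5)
Your proposal is correct and follows essentially the paper's route: Theorem~\ref{thm:abstract} with dilated smooth radial cutoffs (taken with values in $[0,1]$) whose variance is $O(R^{-2})$ by the logarithmic covariance formula, absolute integrability and a uniform majorant in $w$ from Nazarov--Sodin clustering, and passage to every tuple by dominated convergence and continuity. The differences are minor. The paper derives the explicit bound $|\rho_j^T(Z)|\le A_j e^{-\diam(Z)^2/(8(j-1)^2)}$ itself in Lemma~\ref{lem:decay}, and the obstacle you anticipate disappears, as the paper remarks, because a bound uniform over distinct tuples extends to coincident ones by continuity of $\rho_j^T$; also only the upper bound $\Var\mathcal Z(h)\le C\|\Delta h\|_2^2$ is needed.
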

\end{samepage}

For $k=3$, the identity reads
$\int_\C\rho_3^T(z_1,z_2,w)\,dA(w)=-2\rho_2^T(z_1,z_2)$.
The constants in the proof may depend on the order $k$. Repeated
integration gives the iterated formula of \cite[(4.5)]{KY} for GEF zeros.
\begin{corollary}\label{cor:iterated}
Let \(k>r\ge1\) be integers, and fix
\((z_1,\ldots,z_r)\in\C^r\), allowing coincident coordinates.
Then the following integral converges absolutely and satisfies
\[
 \int_{\C^{k-r}}\rho_k^T(z_1,\ldots,z_k)
       \prod_{i=r+1}^k dA(z_i)
 =
 (-1)^{k-r}\frac{(k-1)!}{(r-1)!}
       \rho_r^T(z_1,\ldots,z_r).
\]
In particular the integral over all but one coordinate is
\((-1)^{k-1}(k-1)!/\pi\).
\end{corollary}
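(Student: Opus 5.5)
The statement to prove is Corollary~\ref{cor:iterated}. I will derive it from Theorem~\ref{thm:GEF} by induction on $m=k-r\ge1$, integrating the last coordinate first.

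\emph{Base case.} For $m=1$ the statement is exactly Theorem~\ref{thm:GEF} with $k=r+1$. The constant is $(-1)^1 r!/(r-1)!=-r=-(k-1)$, as required.

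\emph{Inductive step.} For $m\ge2$, I first establish absolute convergence of the $(k-r)$-fold integral, since this is what allows Fubini. Claims~4.1--4.2 of \cite{NS} give integrability of $\rho_k^T$ in all but one coordinate, but here the first $r$ coordinates are fixed and may coincide. I would therefore use the strong clustering bound of \cite[Theorem~1.4]{NS} directly. That bound controls $|\rho_k^T(z_1,\dots,z_k)|$ by $C\exp(-c\,d(z)^2)$, or at least by a rapidly decaying function of the diameter of the configuration. This is uniform in configurations with coincident points, because the continuous extensions in Lemma~\ref{lem:local} are locally bounded. Integrating such a bound over $z_{r+1},\dots,z_k$ with $z_1,\dots,z_r$ fixed is finite: the diameter is at least $\max_{i>r}|z_i-z_1|$, which gives a product-integrable majorant up to polynomial factors.

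With absolute convergence in hand, Fubini lets me integrate $z_k$ first. By Theorem~\ref{thm:GEF}, this is valid pointwise for every $(z_1,\dots,z_{k-1})$, including coincident tuples, which matters because the diagonal set of $z_{r+1},\dots,z_{k-1}$ is null anyway. The inner integral equals $-(k-1)\rho_{k-1}^T(z_1,\dots,z_{k-1})$. The induction hypothesis for the pair $(k-1,r)$ then gives the constant
\[
-(k-1)\cdot(-1)^{k-1-r}\frac{(k-2)!}{(r-1)!}=(-1)^{k-r}\frac{(k-1)!}{(r-1)!}.
\]

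\emph{Final assertion.} For $r=1$ the formula gives $(-1)^{k-1}(k-1)!\,\rho_1(z_1)$. The intensity of GEF zeros is $\rho_1\equiv 1/\pi$, by Edelman--Kostlan, since $\frac1{4\pi}\Delta\log(e^{|z|^2})=1/\pi$. This yields $(-1)^{k-1}(k-1)!/\pi$.

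\emph{Main obstacle.} The only delicate point is the absolute convergence of the multiple integral with fixed, possibly coincident, coordinates. If the form of the clustering bound in \cite{NS} is not directly uniform near diagonals, I would first bound $\rho_k^T$ on the region where all points lie within distance $R$ of $z_1$ by local boundedness from Lemma~\ref{lem:local}. On the complement, some gap separates the points into two clusters, and there I would apply the clustering estimate to that split.
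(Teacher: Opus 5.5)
Your proposal is correct and is essentially the paper's proof: a diameter bound gives the product Gaussian majorant $\prod_{i>r}e^{-c|z_i-z_1|^2}$ for fixed, possibly coincident, $z_1,\ldots,z_r$, and then Fubini with repeated pointwise use of Theorem~\ref{thm:GEF} produces the factors $-(k-1),\ldots,-r$, with $\rho_1=1/\pi$ giving the last claim. Your induction is the paper's ``successive applications'', and pointwise validity matters because coincident fixed coordinates put every tuple of the integration domain on a diagonal, not because of null diagonals among the integrated variables. One correction: cite the bound $|\rho_k^T(Z)|\le A_k e^{-a_k\diam(Z)^2}$ as Lemma~\ref{lem:decay}, since \cite[Theorem~1.4]{NS} is a clustering statement for $\rho_j$ and needs the moment--cumulant induction to reach $\rho_k^T$, and with this Gaussian bound no polynomial correction factors are needed.
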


The proof uses a criterion for simple point processes. For a process
on $\mathbb R^d$, let $\rho_j$ denote its factorial correlation
densities with respect to Lebesgue measure, and define $\rho_j^T$
by \eqref{eq:factorial-cumulants}.

\begin{theorem}\label{thm:abstract}
Let \(X\) be a simple point process on \(\mathbb R^d\) with all local
moments and locally integrable factorial correlation densities.
Fix an integer \(k\ge2\), and write
$z=(z_1,\ldots,z_{k-1})$ and $dz=\prod_{i<k}dz_i$. Assume that
\[
 \int_{K^{k-1}\times\mathbb R^d}|\rho_k^T(z,w)|\,dz\,dw<\infty
 \quad\text{for every compact }K\subset\mathbb R^d.
\]
Suppose there is a sequence of measurable compactly supported
functions $\chi_n:\mathbb R^d\to[0,1]$ such that, for every compact
$K$, $\chi_n=1$ on $K$ for all sufficiently large $n$, and
$\Var X(\chi_n)\to0$.
Then
\begin{equation}\label{eq:abstract-sum-rule}
 \int_{\mathbb R^d}\rho_k^T(z_1,\ldots,z_{k-1},w)\,dw
 =-(k-1)\rho_{k-1}^T(z_1,\ldots,z_{k-1})
\end{equation}
for almost every \((z_1,\ldots,z_{k-1})\). Assume, in addition,
that the factorial cumulant densities have continuous representatives
on tuples of distinct points, and that for each compact
$K$ there is an integrable $M_K(w)$ such that
$|\rho_k^T(z,w)|\le M_K(w)$ whenever $z\in K^{k-1}$ has distinct
coordinates and $w$ differs from each of them. Then the identity holds
at every tuple of distinct points.
\end{theorem}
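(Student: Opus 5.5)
The plan is to test the identity against linear statistics and let the cutoff $\chi_n$ play the role of the integrated coordinate. Fix bounded measurable functions $f_1,\ldots,f_{k-1}$ supported in a compact set $K$, with \emph{pairwise disjoint} supports; at the end take them to be indicators of small disjoint boxes. Consider the joint cumulant
\[
 \kappa_n=\cum\bigl(X(f_1),\ldots,X(f_{k-1}),X(\chi_n)\bigr).
\]
The proof evaluates $\kappa_n$ in two ways.

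\textbf{Step 1: $\kappa_n\to0$.} A joint cumulant of order $k\ge2$ is unchanged when a constant is added to one argument. So we may replace $X(\chi_n)$ by $Y_n=X(\chi_n)-\E X(\chi_n)$. The cumulant is a fixed polynomial in joint moments, and every monomial contains exactly one moment of the form $\E\bigl[Y_n\prod_{i\in S}X(f_i)\bigr]$. The remaining factors are moments of the $X(f_i)$ alone, which are bounded independently of $n$. By Cauchy--Schwarz,
\[
 \Bigl|\E\bigl[Y_n\textstyle\prod_{i\in S}X(f_i)\bigr]\Bigr|\le (\Var X(\chi_n))^{1/2}\,\Bigl\|\textstyle\prod_{i\in S}X(f_i)\Bigr\|_2 .
\]
The last norm is finite and independent of $n$ because $|X(f_i)|\le\|f_i\|_\infty X(K)$ and $X$ has all local moments. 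Hence $\kappa_n\to0$.

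\textbf{Step 2: partition expansion.} Use the standard formula expressing joint cumulants of linear statistics through factorial cumulant densities. For bounded compactly supported $g_1,\ldots,g_k$,
\[
 \cum(X(g_1),\ldots,X(g_k))=\sum_{\sigma\in\partitions_k}\int\rho^T_{|\sigma|}\bigl((y_B)_{B\in\sigma}\bigr)\prod_{B\in\sigma}\Bigl(\prod_{i\in B}g_i\Bigr)(y_B)\,\prod_B dy_B .
\]
This follows from the moment/factorial-moment relation $\E\prod X(g_i)=\sum_\sigma\int\rho_{|\sigma|}\prod_B(\prod_{i\in B}g_i)$ together with Möbius inversion on the partition lattice. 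All terms are finite by local integrability of the $\rho_j$ and by the local moments assumption.

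Now take $g_i=f_i$ for $i<k$ and $g_k=\chi_n$. Because the $f_i$ have disjoint supports, any block containing two indices $i,j<k$ contributes zero. For $n$ large enough that $\chi_n=1$ on $K$, we have $\chi_nf_i=f_i$. Only two kinds of partition survive:
\begin{itemize}
\item the partition into singletons, which gives $\int\rho_k^T(y,w)\chi_n(w)\prod_i f_i(y_i)\,dy\,dw$;
\item the $k-1$ partitions in which $k$ is paired with a single $i<k$, each giving $\int\rho_{k-1}^T(y)\prod_i f_i(y_i)\,dy$.
\end{itemize}
Since $\int_{K^{k-1}\times\mathbb R^d}|\rho_k^T|<\infty$ and $\chi_n\to1$ pointwise with $0\le\chi_n\le1$, dominated convergence lets $n\to\infty$. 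Combined with Step 1 this yields
\[
 \int\prod_{i<k}f_i(y_i)\Bigl[\int\rho_k^T(y,w)\,dw+(k-1)\rho_{k-1}^T(y)\Bigr]dy=0 .
\]

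\textbf{Step 3: conclusion.} The bracket is locally integrable on $\mathbb R^{d(k-1)}$, by the hypothesis and by local integrability of $\rho_{k-1}^T$. Its integral vanishes over every product of pairwise disjoint rational boxes. Such products form a countable $\pi$-system generating the Borel sets of the open set of tuples with distinct coordinates. Hence the bracket vanishes a.e. there, and the complement of this open set is null, so \eqref{eq:abstract-sum-rule} holds almost everywhere.

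For the pointwise statement, the left side is continuous on distinct tuples. Indeed, for each fixed $w$ the integrand is continuous in $z$ outside the null set where $w$ coincides with some $z_i$, and $M_K$ gives dominated convergence. The right side is continuous by assumption. Two continuous functions that agree a.e. on an open set agree everywhere on it.

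I expect the main care to be needed in Step 2. One must justify the cumulant–factorial-cumulant partition formula with only local moments and local integrability. One must also check that the disjoint-support trick really kills every lower-order term, which is what avoids needing integrability hypotheses at orders below $k$.
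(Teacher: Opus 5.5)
Your proposal is correct and follows the paper's proof essentially step for step. You center the cutoff statistic and apply Cauchy--Schwarz to send the mixed cumulant to zero (Lemma~\ref{lem:mixed-cumulant}). You use the partition expansion with disjointly supported tests, so only the singleton partition and the $k-1$ pairings $\{j,k\}$ survive, and then dominated convergence removes $\chi_n$. A uniqueness argument on products of disjoint sets gives the almost-everywhere identity, and continuity together with the majorant $M_K$ gives the pointwise statement.

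The only difference is cosmetic. You use a countable $\pi$-system of products of disjoint rational boxes, where the paper uses disjoint relatively compact open sets with exhaustion. Since the bracket is only locally integrable, the uniqueness theorem should be applied to the $\sigma$-finite measures given by its positive and negative parts, or on bounded pieces.
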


\subsection{Proof outline and related work}
To prove Theorem~\ref{thm:GEF}, we first show how a cutoff statistic
with vanishing variance forces the marginal identity for a general
point process. If the total number of points is finite and
deterministic, a mixed cumulant of order at least two vanishes when
one argument is that total number, and the partition formula gives
the marginal identities \cite[Example~4.6 and Appendix~B]{KY}.
For an infinite process, we use a compactly supported cutoff whose
centered statistic tends to zero in $L^2$.

Under the assumptions of Theorem~\ref{thm:abstract}, fix bounded
measurable compactly supported real tests $h_1,\ldots,h_{k-1}$.
After centering the cutoff argument, Cauchy--Schwarz in the moment
expansion gives
\[
 \bigl|\cum(X(h_1),\ldots,X(h_{k-1}),X(\chi_n))\bigr|
 \le C_{h,k}\sqrt{\Var X(\chi_n)}\longrightarrow0.
\]
Local moments control the products of fixed statistics, so
$C_{h,k}$ is independent of the growing cutoff support
(Lemma~\ref{lem:mixed-cumulant}).

Choose the fixed tests with pairwise disjoint supports. In
\eqref{eq:partition-expansion}, any block containing two fixed
tests contributes zero. Only the partition into singletons and
the $k-1$ partitions whose sole nonsingleton block is $\{j,k\}$
remain. For large $n$, $h_j\chi_n=h_j$, so symmetry makes these
$k-1$ terms equal, each with coefficient $+1$
(Lemma~\ref{lem:partition-expansion}).
Writing $z=(z_1,\ldots,z_{k-1})$ and $dz=\prod_{i<k}dz_i$, we obtain
\begin{align*}
 &\cum(X(h_1),\ldots,X(h_{k-1}),X(\chi_n))\\
 &\quad=\int\prod_{i<k}h_i(z_i)
   \left[\int\chi_n(w)\rho_k^T(z,w)\,dw
          +(k-1)\rho_{k-1}^T(z)\right]dz.
\end{align*}

The absolute integrability assumption and $0\le\chi_n\le1$ permit
dominated convergence in this identity. The bracket without the
cutoff therefore has integral zero against every product of tests
with pairwise disjoint supports. Such products determine a locally
integrable function on configurations of distinct points; the
remaining diagonals have Lebesgue measure zero. This proves the
almost-everywhere conclusion of Theorem~\ref{thm:abstract}.

For GEF zeros, the distributional identity
$\mathcal Z=(2\pi)^{-1}\Delta\log|F|$ reduces the variance of a smooth
linear statistic to the covariance of a logarithmic Gaussian field.
After normalization by $e^{-|z|^2/2}$, this covariance is an
integrable function of $z-w$. Young's inequality gives
$\Var\mathcal Z(\phi)\le C\|\Delta\phi\|_2^2$ for real
$\phi\in C_c^\infty(\C)$ (Lemmas~\ref{lem:log-covariance}
and~\ref{lem:cutoff}). Choose a smooth $0\le\chi\le1$ equal to one
on the unit disk and supported in the disk of radius two. Then
$\chi_R(z)=\chi(z/R)$ is eventually one on every compact set, and
\[
 \Var\mathcal Z(\chi_R)
 \le C\|\Delta\chi_R\|_2^2
 =CR^{-2}\|\Delta\chi\|_2^2\longrightarrow0.
\]
The corresponding mixed cumulant is $O(R^{-1})$ for the fixed tests.

We derive absolute integrability of the truncated correlations
in one coordinate from the strong clustering estimates of
Nazarov and Sodin \cite[Theorems~1.3--1.4]{NS}.
In moment--cumulant inversion, partitions confined to two
separated groups cancel. Each remaining lower-order term contains
a block meeting both groups; induction gives decay for that block,
while the other factors are globally bounded.
A configuration of diameter $D$ can be split into groups separated
by at least $D/(j-1)$, giving Lemma~\ref{lem:decay}:
\[
 |\rho_j^T(Z)|\le A_j e^{-a_j\diam(Z)^2},\qquad
 a_j=\frac1{8(j-1)^2},\quad j\ge2.
\]
Since $\diam(z,w)\ge|w-z_1|$, this bound gives a common integrable
majorant in $w$ when the fixed coordinates range over a compact set.

Lemma~\ref{lem:local} supplies local moments and continuity of
the correlation densities.
Together with the variance bound and common majorant, this verifies
the hypotheses of Theorem~\ref{thm:abstract}. Dominated convergence
also makes the marginal continuous at collisions, so its
almost-everywhere identity extends to every fixed tuple, proving
Theorem~\ref{thm:GEF}.

Small-variance extensions of local tests also appear in the rigidity
criterion of Ghosh and Peres \cite[Theorem~6.1]{GP}.
Here absolute integrability allows us to identify higher-order
marginals.
The argument in \cite[Appendix~C]{KY} instead studies local boundary
terms arising from Laplacians of logarithmic cumulants.

We prove the general criterion in Section~\ref{sec:criterion},
verify its hypotheses for GEF zeros in Sections~\ref{sec:local}
and~\ref{sec:cumulants}, and obtain the marginal identities in
Section~\ref{sec:hierarchy}. Appendix~\ref{app:log-covariance}
proves the logarithmic covariance formula used in
Section~\ref{sec:cumulants}.

\section{A cumulant criterion for marginal sum rules}\label{sec:criterion}
To prove Theorem~\ref{thm:abstract}, we use the partition formula
for mixed cumulants of linear statistics in terms of factorial
cumulant densities \cite[(4.15) and Appendix~A]{KY}.

For real random variables $Y_1,\ldots,Y_j$ for which the expectations
below are finite, define
\begin{equation}\label{eq:ordinary-cumulants}
 \cum(Y_1,\ldots,Y_j)
 =\sum_{\sigma\in\partitions_j}(-1)^{|\sigma|-1}(|\sigma|-1)!
       \prod_{B\in\sigma}\E\prod_{i\in B}Y_i.
\end{equation}
\begin{lemma}\label{lem:partition-expansion}
Let $X$ be a simple point process on $\mathbb R^d$ with all local
moments and factorial correlation densities. Let
$h_1,\ldots,h_k$ be bounded measurable compactly supported real functions.
Then
\begin{equation}\label{eq:partition-expansion}
 \cum(X(h_1),\ldots,X(h_k))
 =
 \sum_{\pi\in\partitions_k}
 \int\prod_{B\in\pi}\prod_{i\in B}h_i(x_B)\,
       \rho_{|\pi|}^T((x_B)_{B\in\pi})\prod_{B\in\pi}dx_B.
\end{equation}
There is one integration variable $x_B$ for each block of $\pi$.
\end{lemma}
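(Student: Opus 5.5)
The plan is to compute the joint moments of the linear statistics through the factorial moment formula and then invert with the moment--cumulant relations. For a subset $S\subset[k]$, the first step is to expand the product $\prod_{i\in S}X(h_i)=\prod_{i\in S}\sum_{u\in X}h_i(u)$ as a sum over maps from $S$ to points of $X$. Grouping by which indices land on the same point gives a partition $\tau$ of $S$ and a sum over tuples of distinct points, one per block of $\tau$. Since $X$ is simple, this gives
\[
 \E\prod_{i\in S}X(h_i)=\sum_{\tau\in\partitions(S)}\int\prod_{C\in\tau}\prod_{i\in C}h_i(x_C)\,\rho_{|\tau|}((x_C)_{C\in\tau})\prod_{C\in\tau}dx_C .
\]
The local moments hypothesis and the boundedness and compact support of the $h_i$ make every term absolutely convergent, by applying the same identity to $|h_i|$ and using $\E X(K)^{|S|}<\infty$. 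This justifies interchanging the sums and the expectation. Next, the factorial densities are expanded in truncated ones through the inverse of \eqref{eq:factorial-cumulants}, namely $\rho_m(y_1,\ldots,y_m)=\sum_{\kappa\in\partitions_m}\prod_{D\in\kappa}\rho^T_{|D|}(y_D)$. This inversion is standard Möbius inversion on the partition lattice, and all quantities are locally integrable.

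The next step is to reorganize the combinatorics. Set $E(\tau,\kappa)$ for the pair consisting of $\tau\in\partitions(S)$ and a partition $\kappa$ of the blocks of $\tau$. Its contribution is a product over the blocks $D$ of $\kappa$ of integrals $\int\prod_{C\in D}\prod_{i\in C}h_i(x_C)\rho^T_{|D|}\,dx$, each depending only on the union $U_D=\bigcup_{C\in D}C$. So the moment becomes
\[
 \E\prod_{i\in S}X(h_i)=\sum_{\sigma\in\partitions(S)}\prod_{U\in\sigma}\Phi(U),
 \qquad
 \Phi(U)=\sum_{\pi\in\partitions(U)}\int\prod_{B\in\pi}\prod_{i\in B}h_i(x_B)\,\rho^T_{|\pi|}\prod_{B\in\pi}dx_B .
\]
Here $\sigma=\{U_D\}$ and $\pi$ is the restriction of $\tau$ to each $U$, and one checks that the map $(\tau,\kappa)\mapsto(\sigma,(\pi_U)_{U\in\sigma})$ is a bijection. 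The moments of the family are thus given by the moment--cumulant formula with the functional $\Phi$ in place of the cumulants. By uniqueness of the Möbius inversion \eqref{eq:ordinary-cumulants}, $\cum(X(h_i):i\in U)=\Phi(U)$ for every $U$, and taking $U=[k]$ gives \eqref{eq:partition-expansion}.

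The main obstacle is bookkeeping rather than analysis. The bijection between the pairs $(\tau,\kappa)$ and the pairs $(\sigma,(\pi_U))$ must be stated carefully, and the order of operations must be justified. In particular, the truncated densities $\rho^T$ may have no sign. So the integrability of each product term should be checked by bounding $|\rho^T_m|$ by a finite sum of products of $\rho_j$. Each such product is locally integrable against bounded compactly supported tests, because it factorizes over blocks into integrals of $\rho_j$, each finite by the local moments.
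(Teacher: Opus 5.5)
Your proof is correct, but it runs the combinatorics in the opposite direction from the paper.

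The paper substitutes the factorial-moment expansion of each block moment directly into \eqref{eq:ordinary-cumulants}. A product of block moments over an outer partition $\sigma$ then produces a partition $\pi$ refining $\sigma$. For fixed $\pi$, the sum over coarsenings $\sigma\ge\pi$, with coefficients $(-1)^{|\sigma|-1}(|\sigma|-1)!$, is exactly the definition \eqref{eq:factorial-cumulants} of $\rho^T_{|\pi|}$ at $(x_B)_{B\in\pi}$, because coarsenings of $\pi$ are the same as partitions of its blocks.

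You instead invert \eqref{eq:factorial-cumulants} to get $\rho_m=\sum_\kappa\prod_D\rho^T_{|D|}$. You then regroup via the bijection $(\tau,\kappa)\leftrightarrow(\sigma,(\pi_U)_{U\in\sigma})$ to show that $\Phi$ generates the mixed moments through the moment--cumulant formula on every $S\subseteq[k]$. You conclude by the inductive uniqueness of that inversion. The uniqueness step genuinely needs the relation on all subsets, not only on $[k]$, and your argument supplies this.

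The paper's route is shorter: it needs neither the inverse relation nor a uniqueness argument. Yours never tracks the alternating coefficients, and it shows that $\Phi$ satisfies the moment--cumulant relations on every subset. The cost is the bijection bookkeeping.

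One small wording fix: each factor in your regrouping depends on $U_D$ together with the restriction of $\tau$ to $U_D$, not on $U_D$ alone. That is exactly why the sum over $\pi_U$ sits inside $\Phi(U)$.
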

\begin{proof}
Expand a product of linear statistics by grouping indices
evaluated at the same point into blocks of a partition $\pi$.
Distinct blocks correspond to distinct points, so the expected
contribution is an integral against $\rho_{|\pi|}$.

Apply this expansion to each block moment in
\eqref{eq:ordinary-cumulants}. For fixed $\pi$, the contributing
outer partitions are the coarsenings $\sigma\ge\pi$, or
equivalently the partitions of the blocks of $\pi$.
A coarsening with $r$ blocks has coefficient
$(-1)^{r-1}(r-1)!$ and groups the variables $x_B$ accordingly.
Summing over these coarsenings gives
\eqref{eq:factorial-cumulants} at order $|\pi|$, proving
\eqref{eq:partition-expansion}.

All tests are supported in one compact set $K$, and $\E X(K)^k<\infty$
controls the absolute moments. Diagonals between variables in
different moment factors have Lebesgue measure zero. Hence each
partition has coefficient $+1$ in \eqref{eq:partition-expansion};
the alternating coefficients are contained in $\rho_{|\pi|}^T$.
\end{proof}

\begin{lemma}\label{lem:mixed-cumulant}
Let $X$ be a point process with all local moments. Fix an integer $k\ge2$
and bounded measurable compactly supported real functions $h_1,\ldots,h_{k-1}$.
Then, for every bounded measurable compactly supported real function $v$,
\[
 |\cum(X(h_1),\ldots,X(h_{k-1}),X(v))|
 \le C_{h,k}\sqrt{\Var X(v)}.
\]
The constant may depend on the process, the fixed tests and $k$,
but not on $v$ or its support.
\end{lemma}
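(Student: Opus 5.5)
We will use the multilinearity and translation invariance of cumulants in each argument. Set $Y_i=X(h_i)$ for $i<k$ and $V=X(v)$. Since $k\ge2$, the cumulant is unchanged if a constant is added to any argument, so $\cum(Y_1,\ldots,Y_{k-1},V)=\cum(Y_1,\ldots,Y_{k-1},V_0)$ with $V_0=V-\E V$. All moments involved are finite: $|Y_i|\le\|h_i\|_\infty X(K)$ for a compact $K$ containing the supports of the $h_i$. Likewise $|V|\le\|v\|_\infty X(\operatorname{supp}v)$, which has all moments by the local-moment assumption. Expand the cumulant by \eqref{eq:ordinary-cumulants}. Each partition $\sigma\in\partitions_k$ has exactly one block $B_0$ containing the index $k$.

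The key observation is that every product $\prod_{B\in\sigma}\E\prod_{i\in B}Y_i$ contains the factor $\E\bigl[V_0\prod_{i\in B_0\setminus\{k\}}Y_i\bigr]$, with $Y_k:=V_0$. The remaining factors involve only the fixed statistics. If $B_0=\{k\}$, this factor is $\E V_0=0$. Otherwise Cauchy--Schwarz gives
\[
 \Bigl|\E\Bigl[V_0\prod_{i\in B_0\setminus\{k\}}Y_i\Bigr]\Bigr|
 \le\sqrt{\Var V}\,\Bigl(\E\prod_{i\in B_0\setminus\{k\}}Y_i^2\Bigr)^{1/2}.
\]
The second factor is bounded by $\prod_i\|h_i\|_\infty\,(\E X(K)^{2(k-1)})^{1/2}$ (with the convention that sup norms $\ge1$, or just take the max over subsets). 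The other blocks of $\sigma$ contribute $|\E\prod_{i\in B}Y_i|\le\prod_{i\in B}\|h_i\|_\infty\,\E X(K)^{|B|}$. None of these bounds depends on $v$.

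Summing over the finitely many $\sigma\in\partitions_k$ with coefficients $(|\sigma|-1)!$ gives the claimed inequality. The constant is
\[
 C_{h,k}=\sum_{\sigma\in\partitions_k}(|\sigma|-1)!\cdot\max_{S\subset[k-1]}\Bigl(\E\prod_{i\in S}Y_i^2\Bigr)^{1/2}\prod_{B}\E\prod_{i\in B}|Y_i|,
\]
which depends only on the process, the $h_i$ and $k$.

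The main point needing care is the reduction to $V_0$. One must check that replacing $V$ by $V-c$ leaves the joint cumulant unchanged when $k\ge2$. This follows either from the generating-function definition, or directly from \eqref{eq:ordinary-cumulants}: the terms proportional to $c$ form a cumulant in which one argument is a constant, and such a cumulant vanishes. Everything else is bookkeeping: finiteness of the mixed moments, and the observation that $v$ enters only through the single block containing index $k$, where it is measured by $\sqrt{\Var X(v)}$. Hence the size of the support of $v$ plays no role.
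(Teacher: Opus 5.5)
Your proposal is correct and follows the paper's proof essentially step for step. You center the last argument, observe that the block containing it vanishes when it is a singleton and is otherwise bounded via Cauchy--Schwarz by $\sqrt{\Var X(v)}$ times local moments of $X(K)$ up to order $2(k-1)$, and bound the remaining blocks by fixed local moments. The one step you only assert, that a cumulant with a constant argument vanishes, the paper checks directly from \eqref{eq:ordinary-cumulants}: grouping partitions by their restriction to $[k-1]$ with $r$ blocks, the coefficients cancel as $(-1)^r r!+r(-1)^{r-1}(r-1)!=0$. That route is preferable to the generating-function one, which would need exponential moments not assumed here.
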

\begin{proof}
A cumulant of order at least two is unchanged by centering its
last argument: for a partition of the first $k-1$ indices into
$r$ blocks, the terms with a constant last argument have total
coefficient $(-1)^r r!+r(-1)^{r-1}(r-1)!=0$.

Put $Y=X(v)-\E X(v)$. In each term of
\eqref{eq:ordinary-cumulants}, the block containing $Y$ contributes
$\E[Y\prod_{i\in S}X(h_i)]$. This vanishes for $S=\varnothing$;
otherwise Cauchy--Schwarz gives
\[
 \left|\E\left[Y\prod_{i\in S}X(h_i)\right]\right|
 \le\sqrt{\Var X(v)}
 \left(\E\left|\prod_{i\in S}X(h_i)\right|^2\right)^{1/2}.
\]
Choose a compact $K$ containing the fixed supports. The squared
product is bounded by a constant times $X(K)^{2(k-1)}$, and all
other moment factors involve only the fixed statistics.
Summing over partitions proves the estimate, with a constant
independent of $v$ and its support. Local moments through order
$2(k-1)$ suffice.
\end{proof}

\begin{proof}[Proof of Theorem~\ref{thm:abstract}]
Fix bounded compactly supported real measurable functions
$h_1,\ldots,h_{k-1}$ with pairwise disjoint supports, all contained
in a compact set $K$. Write $z=(z_1,\ldots,z_{k-1})$ and
$dz=\prod_{i<k}dz_i$. In \eqref{eq:partition-expansion}, any block
containing two fixed indices contributes zero. The surviving
partitions are the partition into singletons and the $k-1$
partitions whose only nonsingleton block is $\{j,k\}$.
For large $n$, $\chi_n=1$ on $K$, so $h_j\chi_n=h_j$.
The $k-1$ lower-order integrals are equal by symmetry, each with
coefficient $+1$, giving
\begin{align*}
 &\cum(X(h_1),\ldots,X(h_{k-1}),X(\chi_n))\\
 &\quad=\int\prod_{i<k}h_i(z_i)\chi_n(w)\rho_k^T(z,w)\,dz\,dw\\
 &\qquad +(k-1)\int\prod_{i<k}h_i(z_i)\rho_{k-1}^T(z)\,dz.
\end{align*}

The left-hand side tends to zero by Lemma~\ref{lem:mixed-cumulant}.
The first integrand is bounded independently of $n$ by the
integrable function
\[
 \left(\prod_{i<k}\|h_i\|_\infty\right)
 \boldsymbol 1_{K^{k-1}}(z)|\rho_k^T(z,w)|.
\]
Dominated convergence removes the cutoff. The second integral
is finite by local integrability of $\rho_{k-1}^T$.
Thus the locally integrable function
\[
 H(z)=\int\rho_k^T(z,w)\,dw+(k-1)\rho_{k-1}^T(z),
\]
defined almost everywhere, has integral zero against every such
product of tests.

Let $U_1,\ldots,U_{k-1}$ be pairwise disjoint relatively compact
open sets. Taking the $h_i$ to be indicators of measurable
subsets compactly contained in $U_i$ shows that the signed measure
with density $H$ vanishes on their products. Uniqueness of finite
signed measures on rectangles, applied on compact subboxes and
followed by exhaustion, gives $H=0$ almost everywhere on
$\prod_iU_i$. A countable basis of relatively compact balls gives
a countable cover of the distinct configurations by such
products. The remaining diagonals have Lebesgue measure zero,
proving \eqref{eq:abstract-sum-rule}. For $k=2$, disjointness is
vacuous and the same argument applies.

Under the additional assumptions, let distinct tuples $z^{(m)}$
converge to a distinct tuple $z$. For $w$ outside the coordinates
of $z$, the tuple $(z^{(m)},w)$ is eventually distinct, so continuity gives
$\rho_k^T(z^{(m)},w)\to\rho_k^T(z,w)$.
All fixed coordinates lie in one compact set, whose integrable
majorant permits dominated convergence. Hence the marginal
integral is continuous on the distinct configurations.
Together with continuity of $\rho_{k-1}^T$, this makes $H$
continuous there. Its almost-everywhere vanishing gives $H=0$
everywhere on this set.
\end{proof}

\section{Correlation estimates for planar GEF zeros}\label{sec:local}
We establish the local moments and correlation bounds needed to
apply Theorem~\ref{thm:abstract} in Section~\ref{sec:hierarchy}.
Local repulsion gives continuity at coincident coordinates, while
clustering gives an integrable bound in the last coordinate,
uniform when the remaining coordinates vary in a compact set.
\begin{lemma}\label{lem:local}
Almost surely, the series \(F\) defines an entire function which is
not identically zero and has only simple zeros. For every compact
set, the number of zeros has finite moments of all positive orders.
The ordered factorial correlation densities exist and are smooth at
tuples of distinct points. For every integer \(j\ge1\) and every
\(M>0\), there is \(C_{j,M}<\infty\) such that
\begin{equation}\label{eq:local-repulsion}
 0\le\rho_j(z_1,\ldots,z_j)
 \le C_{j,M}\prod_{a<b}|z_a-z_b|^2,\qquad |z_i|\le M.
\end{equation}
For \(j\ge2\), setting \(\rho_j=0\) whenever two coordinates coincide
gives a continuous extension to \(\C^j\). The partition formula
\eqref{eq:factorial-cumulants} then extends each \(\rho_j^T\)
continuously to \(\C^j\).
\end{lemma}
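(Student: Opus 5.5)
We prove the statements of Lemma~\ref{lem:local} in order: almost-sure properties of \(F\), local moments, existence and smoothness of the densities, then the repulsion bound \eqref{eq:local-repulsion} and the continuous extensions.

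\emph{Almost-sure properties.} Since \(\E|\xi_n|^2=1\), Borel--Cantelli gives \(|\xi_n|\le n\) eventually. Hence the series converges locally uniformly almost surely, and \(F\) is entire. Also \(F\not\equiv0\) almost surely, because \(\xi_0\ne0\) almost surely.

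\emph{Simple zeros.} We show that \(F\) and \(F'\) have no common zero in a disk \(D\). The pair \((F(z),F'(z))\) has a nondegenerate proper Gaussian law for each \(z\), with covariance determined by the kernel \(e^{z\bar w}\). So the probability that \(|F(z)|<\epsilon\) and \(|F'(z)|<\epsilon\) is \(O(\epsilon^4)\), uniformly on \(D\). Cover \(D\) by \(O(\epsilon^{-2})\) disks of radius \(\epsilon\), and on the event \(\sup_{2D}|F''|\le L\) use Taylor bounds. The probability of a double zero in \(D\) is then \(O(\epsilon^{-2}\epsilon^4)\to0\); letting \(L\to\infty\) handles the remaining event.

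\emph{Local moments.} Jensen's formula bounds the number of zeros in \(D(0,r)\) by \(\log(\sup_{D(0,er)}|F|/|F(0)|)\). Here \(\sup|F|\) has Gaussian tails by Borell--TIS or a direct series bound. The term \(-\log|F(0)|=-\log|\xi_0|\) has all moments, since \(|\xi_0|^2\) is exponential. If \(F(0)\) is small, we recenter at another point, or simply note that the moments of \(\log|\xi_0|\) are finite. This gives \(\E\mathcal Z(K)^p<\infty\) for all \(p\).

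\emph{Densities and smoothness.} For distinct \(z_1,\ldots,z_j\), the vector \((F(z_i))_{i\le j}\) has an invertible covariance matrix \(\Sigma=(e^{z_a\bar z_b})\), because the exponentials \(e^{\bar z_b\,\cdot}\) are linearly independent. The Kac--Rice formula then gives
\[
\rho_j(z)=\frac{\E\bigl[\prod_i|F'(z_i)|^2\,\big|\,F(z_i)=0\ \forall i\bigr]}{\pi^j\det\Sigma(z)}.
\]
Local moments and simplicity justify Kac--Rice in the standard way, for example as in Hough--Krishnapur--Peres--Virág. The formula is smooth off the diagonals, since conditional covariances are smooth there. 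Nonnegativity is clear.

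\emph{Repulsion bound; main obstacle.} The main obstacle is \eqref{eq:local-repulsion} near coincidences, where \(\det\Sigma\to0\). We rewrite the formula in divided-difference form. Let \(P\) be the degree-\((j-1)\) interpolation data, and use the Vandermonde identity
\[
\det\Sigma(z)=|V(z)|^2\det G(z),\qquad V(z)=\prod_{a<b}(z_b-z_a),
\]
where \(G\) is the covariance of the divided differences \(F[z_1],F[z_1,z_2],\ldots,F[z_1,\ldots,z_j]\). Divided differences of analytic functions are analytic in all variables and extend continuously to coincident points, becoming Taylor coefficients there. The Taylor-coefficient covariance \(\mathrm{diag}(1/m!)\) is nondegenerate, so \(\det G\) is bounded below on \(|z_i|\le M\) by compactness.

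On the event \(\{F(z_i)=0\ \forall i\}\), we have \(F=V\)-weighted divided differences times an analytic remainder. Concretely,
\[
F'(z_a)=\prod_{b\ne a}(z_a-z_b)\cdot F[z_1,\ldots,z_j,z_a]\ \text{(confluent)}.
\]
So \(\prod_a|F'(z_a)|^2=|V|^4\prod_a|F[z,z_a]|^2\). The conditional moments of these confluent divided differences are bounded uniformly on compacts, by Gaussian moment bounds and the nondegeneracy of \(G\). Dividing by \(\det\Sigma=|V|^2\det G\) gives \(\rho_j\le C_{j,M}|V|^2\), which is \eqref{eq:local-repulsion}. The care needed is to make the conditioning uniform. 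We condition on \(F[z_1,\ldots,z_m]=0\) for \(m\le j\), which is an equivalent event off the diagonal and has a uniformly nondegenerate covariance.

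\emph{Continuous extensions.} Since \(|V|^2\to0\) at collisions, setting \(\rho_j=0\) on the diagonals gives a continuous extension. It remains to check that this is consistent with the smooth values off the diagonal; this holds because the bound is uniform. Each term of \eqref{eq:factorial-cumulants} is a product of continuous functions, so \(\rho_j^T\) extends continuously to \(\C^j\).
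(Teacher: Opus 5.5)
Your proof is correct, but it departs from the paper in two places.

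\textbf{Simple zeros.} The paper conditions on all coefficients except $\xi_0$ and writes $F=\xi_0+G$. A multiple zero would then force the independent, non-atomic $\xi_0$ to equal one of the countably many values $-G(z)$ with $G'(z)=0$. Your covering argument uses the bounded density of the nondegenerate pair $(F(z),F'(z))$ together with Taylor bounds on $\{\sup_{2D}|F''|\le L\}$. It is longer, but it works for any Gaussian analytic function whose value--derivative pair is locally uniformly nondegenerate. The paper's argument relies on the independent constant term.

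\textbf{Repulsion bound.} For \eqref{eq:local-repulsion} the paper simply cites Theorem~1.3 of Nazarov and Sodin. You prove it directly from two facts:
\begin{itemize}
\item the factorization $\det\Sigma=|V|^2\det G$;
\item the identity $F'(z_a)=\prod_{b\ne a}(z_a-z_b)\,F[z_1,\ldots,z_j,z_a]$ on the zero event.
\end{itemize}
Passing the conditioning from $\{F(z_i)=0\}$ to $\{F[z_1,\ldots,z_m]=0,\ m\le j\}$ is legitimate for Gaussian regression. This makes the lemma self-contained. The citation, in turn, also supplies the translation-uniform bound \eqref{eq:NS-bounded} that the paper needs later in Lemma~\ref{lem:decay}. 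Your compactness argument, with $M$-dependent constants, does not give that by itself.

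\textbf{One step needs a better justification.} Nondegeneracy of $\mathrm{diag}(1/m!)$ only gives $\det G>0$ at the full collision at the origin. Your compactness argument needs $\det G>0$ at every point of $\{|z_i|\le M\}^j$, including partial collisions. Without that, the lower bound on $\det G$ near such points is not justified.

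This is easily repaired by the Hermite-interpolation reasoning the paper uses for nondegeneracy:
\begin{itemize}
\item Suppose $\sum_m c_m F[z_1,\ldots,z_m]$ has zero variance. Since the $\xi_n$ are independent with positive variance, the functional must annihilate every monomial, hence every polynomial.
\item The Newton polynomials $p_n(\zeta)=\prod_{i<n}(\zeta-z_i)$ satisfy $p_n[z_1,\ldots,z_m]=\delta_{mn}$, also at confluent points.
\item Testing the functional on each $p_n$ therefore forces every $c_n=0$.
\end{itemize}
With this, $\det G$ is continuous and positive on the compact set, hence bounded below, and the rest of your argument goes through.
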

\begin{proof}
For every $M>0$,
$\E\sum_n|\xi_n|M^n/\sqrt{n!}<\infty$, so the series is almost
surely uniformly convergent on the disk of radius $M$. Taking the
intersection of these events over positive integer radii shows that $F$ is almost
surely entire. Since $\xi_0\ne0$ almost surely, $F$ is not identically
zero.
Condition on all coefficients except $\xi_0$ and write $F=\xi_0+G$.
Almost surely $G'$ is not identically zero, so its zeros form a
countable set. A multiple zero of $F$ would require $\xi_0=-G(z)$ at
one of these points. Since $\xi_0$ is independent of $G$ and has a
non-atomic distribution, this event has conditional probability zero.

For $0<M<R$, Jensen's formula gives
\[
 N_M(F)\log(R/M)
 \le \log^+\sup_{|z|\le R}|F(z)|+\log^-|\xi_0|,
\]
where $N_M(F)$ counts zeros in the disk of radius $M$, and
$\log^+t=\max(\log t,0)$, $\log^-t=\max(-\log t,0)$.
The inequality follows first for radii with no zeros on the boundary
and then for arbitrary radii by passage to the limit. For every integer $p\ge1$,
Minkowski's inequality and the Gaussian density give
\begin{align*}
 \left\|\sup_{|z|\le R}|F(z)|\right\|_p
 &\le\sum_{n\ge0}\|\xi_n\|_p R^n/\sqrt{n!}<\infty,\\
 \E(\log^-|\xi_0|)^p
 &\le2\int_0^1r(-\log r)^p\,dr<\infty.
\end{align*}
Here $\|\cdot\|_p$ is the $L^p$ norm on the probability space.
Since $\log^+t\le t$, Jensen's bound gives all positive integer
moments of $N_M(F)$, hence all positive moments of the zero count
on any compact set.

The local repulsion bound follows from \cite[Theorem~1.3]{NS}.
For existence and smoothness of the densities, we use the complex
Kac--Rice formula \cite[Corollary~3.4.2]{HKPV}.
The Gaussian vector of values and derivatives at distinct points is
nondegenerate: if a linear combination has zero variance, the
corresponding linear functional annihilates all monomials. Hermite
interpolation then forces all coefficients of the combination to vanish.
For $Z=(z_1,\ldots,z_j)$, the covariance blocks and Kac--Rice
density are
\[
 \begin{gathered}
 A_{ab}=\E F(z_a)\overline{F(z_b)},\quad
 B_{ab}=\E F'(z_a)\overline{F(z_b)},\quad
 C_{ab}=\E F'(z_a)\overline{F'(z_b)},\\
 \rho_j(Z)=\frac{\operatorname{per}(C-BA^{-1}B^*)}{\pi^j\det A}.
 \end{gathered}
\]
Here $\operatorname{per}D=\sum_{\tau\in S_j}\prod_{i=1}^j D_{i,\tau(i)}$
is the permanent and $S_j$ is the permutation group. Joint properness
implies that the conditional derivative vector, given zero values
at the fixed points, is proper with covariance $C-BA^{-1}B^*$.
The covariance matrices depend smoothly on the points, and
$\det A>0$ when the points are distinct. Hence $\rho_j$ is smooth
away from collisions. The right-hand side of \eqref{eq:local-repulsion} tends
to zero as two coordinates approach each other within a fixed compact
set. Thus extending each $\rho_j$, $j\ge2$, by zero at coincident
coordinates gives a continuous function. Since
$\rho_1=1/\pi$ by \eqref{eq:mean}, continuity of $\rho_j^T$ follows
from \eqref{eq:factorial-cumulants}.
\end{proof}

For $Z=(z_1,\ldots,z_j)$, write
$\diam(Z)=\max_{a,b}|z_a-z_b|$. For disjoint nonempty index sets $I,J$,
set $d(Z_I,Z_J)=\min_{a\in I,b\in J}|z_a-z_b|$.

Theorems 1.3 and 1.4 of Nazarov and Sodin \cite{NS} give, for every
integer \(j\ge1\),
\begin{equation}\label{eq:NS-bounded}
 0\le \rho_j(Z)\le B_j
       \prod_{a<b}\min\{|z_a-z_b|^2,1\},
\end{equation}
and, for every nonempty bipartition \([j]=I\sqcup J\),
\begin{equation}\label{eq:NS-clustering}
 |\rho_j(Z)-\rho_{|I|}(Z_I)\rho_{|J|}(Z_J)|
 \le C_j e^{-(d-\delta_j)^2/2},
 \qquad d=d(Z_I,Z_J)\ge2\delta_j.
\end{equation}
The bounds extend from distinct to coincident coordinates by
continuity; the separation condition in \eqref{eq:NS-clustering}
remains unchanged. All constants here depend only on \(j\).

\begin{lemma}\label{lem:decay}
For every integer \(j\ge2\), there is \(A_j<\infty\) such that
\begin{equation}\label{eq:diameter-bound}
 |\rho_j^T(Z)|
 \le A_j\exp\left[-\frac{\diam(Z)^2}{8(j-1)^2}\right].
\end{equation}
Put \(a_j=1/[8(j-1)^2]\). For every fixed tuple
\((z_1,\ldots,z_{j-1})\), the function
\(w\mapsto\rho_j^T(z_1,\ldots,z_{j-1},w)\) is absolutely integrable,
with integral of its absolute value at most \(\pi A_j/a_j\).
When the fixed coordinates range over a compact set, choose $M$
so that $|z_1|\le M$. A common integrable majorant in $w$ is
\begin{equation}\label{eq:compact-majorant}
 A_j e^{-a_j(|w|-M)_+^2}.
\end{equation}
Here $r_+=\max\{r,0\}$. The integral of \(\rho_j^T\) in the last coordinate is a continuous
function of the remaining coordinates.
\end{lemma}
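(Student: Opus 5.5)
We argue by strong induction on $j\ge2$, using only the global bound \eqref{eq:NS-bounded}, the clustering estimate \eqref{eq:NS-clustering}, and the partition formula \eqref{eq:factorial-cumulants}. The induction hypothesis is that for every $2\le m<j$ there are $A_m,c_m>0$ with $|\rho_m^T(Z)|\le A_m e^{-c_m\diam(Z)^2}$, where $c_m\ge a_j=1/[8(j-1)^2]$ (this holds since $a_m\ge a_j$).

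The first ingredient is a uniform bound. By \eqref{eq:NS-bounded}, every $\rho_m$ is globally bounded by $B_m$. Hence each $\rho_j^T$ is bounded by a constant depending only on $j$. It therefore suffices to prove \eqref{eq:diameter-bound} when $D=\diam(Z)$ is large, say $D\ge D_j$; for smaller $D$ we enlarge $A_j$.

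The second ingredient is a gap in the configuration. Order the $j$ points by distance from $z_1$. Equivalently, consider the connected components of the graph joining points at distance $<D/(j-1)$. If this graph were connected, a path of at most $j-1$ edges would link any two points, forcing $\diam<D$. So the graph is disconnected, and there is a bipartition $[j]=I\sqcup J$ with $d(Z_I,Z_J)\ge D/(j-1)$.

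The third step, which is the main one, is a cancellation argument. We show that
\[
\rho_j^T(Z)=\sum_\pi(-1)^{|\pi|-1}(|\pi|-1)!\Bigl[\prod_{B\in\pi}\rho_{|B|}(z_B)-\prod_{B\in\pi}\rho_{|B\cap I|}(z_{B\cap I})\rho_{|B\cap J|}(z_{B\cap J})\Bigr],
\]
with the convention $\rho_0=1$. The subtracted sum is the truncated correlation of the product-split family, in which $\rho_m$ is replaced by $\tilde\rho(z_B)=\rho(z_{B\cap I})\rho(z_{B\cap J})$. This family is the correlation of two independent processes glued together. Its factorial cumulant of order $j$ with indices in both $I$ and $J$ vanishes, by the standard fact that a joint cumulant of independent groups vanishes; combinatorially, it follows from Möbius inversion on the partition lattice.

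We then bound each bracket. By telescoping, the bracket is bounded by a sum over blocks $B$ meeting both $I$ and $J$ of $|\rho_{|B|}(z_B)-\rho(z_{B\cap I})\rho(z_{B\cap J})|$, times products of the globally bounded $\rho$'s. Each such difference is at most $C e^{-(d-\delta)^2/2}$ by \eqref{eq:NS-clustering} applied to the subconfiguration $z_B$. Its separation is at least $d\ge D/(j-1)\ge 2\delta$ once $D$ is large.

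This gives $|\rho_j^T|\le C' e^{-(D/(j-1)-\delta_j)^2/2}$. For $D$ large, this is at most $C'' e^{-D^2/(8(j-1)^2)}$, since $(x-\delta)^2/2\ge x^2/8$ once $x\ge2\delta$ (indeed $x-\delta\ge x/2$). So the induction is in fact unnecessary, because clustering applies directly to $\rho$. The outline's "induction" version also works, but the direct route above is cleaner. The main obstacle I expect is writing the cancellation identity rigorously, specifically verifying that the split family's top truncated function vanishes. I would do this by viewing both families as factorial moment densities and using the cumulant-of-independent-sums argument as a formal generating-function identity.

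The remaining claims then follow. Since $\diam(z,w)\ge|w-z_1|\ge(|w|-M)_+$, the bound \eqref{eq:diameter-bound} gives the majorant \eqref{eq:compact-majorant}. We have $\int A_je^{-a_j|w-z_1|^2}\,dA=\pi A_j/a_j$. Finally, continuity of the marginal follows from continuity of $\rho_j^T$ (Lemma~\ref{lem:local}) together with dominated convergence using \eqref{eq:compact-majorant}.
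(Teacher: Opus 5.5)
Your proof is correct, but it takes a different route from the paper's. The paper proves the bipartition estimate $|\rho_j^T(Z)|\le A_je^{-d(Z_I,Z_J)^2/8}$ by induction on $j$:
\begin{itemize}
\item it writes $\rho_j=\sum_\pi\prod_B\rho^T_{|B|}$;
\item it notes that the partitions refining $I\sqcup J$ sum to $\rho_{|I|}(Z_I)\rho_{|J|}(Z_J)$;
\item it controls every remaining mixing partition with at least two blocks through the inductive hypothesis applied to its mixing block.
\end{itemize}
So clustering of raw densities is used there only for the one-block term $\rho_j-\rho_{|I|}\rho_{|J|}$.

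You instead subtract the top cumulant of the split family $S\mapsto\rho(z_{S\cap I})\rho(z_{S\cap J})$. That cumulant vanishes, and your Möbius-inversion justification is sufficient: the cumulant function that agrees with the separate $I$- and $J$-cumulants and is zero on mixed blocks reproduces the factorized moments, so by uniqueness it is the cumulant. You then telescope each partition's product difference, so that clustering of the raw $\rho_{|B|}$ on each mixed block does all the work.

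\begin{itemize}
\item What your route buys: no induction and no decay information for lower-order truncated correlations.
\item What it costs: you need the vanishing lemma. This is essentially the same inversion fact the paper uses to identify the refining sum with $\rho_{|I|}\rho_{|J|}$.
\item Common ground: both arguments give the same exponent $1/[8(j-1)^2]$ after the identical graph-splitting step. Your identity also yields the bipartition estimate for every $I\sqcup J$, using the global bound when $d<2\delta$.
\end{itemize}

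Some minor tidying points, none of which is a gap:
\begin{itemize}
\item The inductive hypothesis in your first paragraph is never used and can be dropped.
\item Take $\delta=\max_{m\le j}\delta_m$ and note that $d(z_{B\cap I},z_{B\cap J})\ge d(Z_I,Z_J)$. Then each block bound $C_{|B|}e^{-(d_B-\delta_{|B|})^2/2}$ is dominated by $e^{-(d-\delta)^2/2}$.
\item Ordering the points by distance from $z_1$ is not equivalent to the graph argument. Only the graph argument is needed.
\end{itemize}
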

\begin{proof}
The finite partition formula and \eqref{eq:NS-bounded} show that
$|\rho_j^T|$ is globally bounded by $T_j$, where we may take
\[
 T_j=\sum_{\pi\in\partitions_j}(|\pi|-1)!
               \prod_{B\in\pi}B_{|B|}.
\]
Combining \eqref{eq:NS-clustering} with the global bounds gives, for
every nonempty bipartition \([j]=I\sqcup J\) and every separation
\(d=d(Z_I,Z_J)\ge0\),
\[
 |\rho_j(Z)-\rho_{|I|}(Z_I)\rho_{|J|}(Z_J)|
 \le Q_j e^{-d^2/8}.
\]
For \(d\ge2\delta_j\), use \(d-\delta_j\ge d/2\); for smaller \(d\),
use the global bounds for the two terms.

We prove by induction on \(j\) that, for every nonempty bipartition
\([j]=I\sqcup J\),
\[
 |\rho_j^T(Z)|\le A_j e^{-d(Z_I,Z_J)^2/8}.
\]
The case \(j=2\) is the preceding estimate. The finite
moment--cumulant inversion of \eqref{eq:factorial-cumulants} is
\(\rho_j=\sum_{\pi\in\partitions_j}\prod_{B\in\pi}\rho_{|B|}^T\).
A partition refines \(I\sqcup J\) if each block lies entirely in $I$
or entirely in $J$. The sum over all such partitions is
\(\rho_{|I|}(Z_I)\rho_{|J|}(Z_J)\). Subtracting this product from
\(\rho_j(Z)\) leaves precisely the partitions with a block meeting
both $I$ and $J$. Separating the one-block partition gives
\[
 \rho_j^T(Z)=\rho_j(Z)-\rho_{|I|}(Z_I)\rho_{|J|}(Z_J)
 -\sum_{\substack{\pi\text{ mixes }I\sqcup J\\|\pi|\ge2}}
     \prod_{B\in\pi}\rho_{|B|}^T(Z_B).
\]
Every partition in the sum has a block of size between $2$ and
$j-1$ meeting both groups. Its restricted bipartition has separation
at least $d(Z_I,Z_J)$, so induction bounds its truncated correlation
by a constant times $e^{-d^2/8}$. The other factors are globally
bounded. Since $j$ is fixed, the finitely many partitions and
bipartitions admit a common constant $A_j$.

Let \(D=\diam(Z)>0\). Join two points when their distance is
strictly less than \(D/(j-1)\). If the resulting graph were connected,
a simple path with at most \(j-1\) edges would join two points at
distance \(D\), with total path length strictly less than \(D\), a
contradiction. Thus the graph is disconnected. A connected component
and its complement have separation at least \(D/(j-1)\). This proves
\eqref{eq:diameter-bound}; the case \(D=0\) follows from the global bound.

Since $D\ge|w-z_1|$, integrating
$A_j e^{-a_j|w-z_1|^2}$ gives the bound $\pi A_j/a_j$.
For $|z_1|\le M$, the inequality
$|w-z_1|\ge(|w|-M)_+$ gives the integrable majorant
\eqref{eq:compact-majorant}. The same majorant bounds the integrands
along any convergent sequence of fixed tuples in a common compact set.
Lemma~\ref{lem:local} and dominated convergence therefore give
continuity of the marginal, including at coincident coordinates.
\end{proof}

\section{Smooth statistics with vanishing variance}\label{sec:cumulants}
We obtain the smooth cutoffs for Theorem~\ref{thm:abstract} from
the Gaussian covariance formula below \cite[Lemma~2.2]{NSsmooth}.
Appendix~\ref{app:log-covariance} gives a proof covering the full
range $0\le q\le1$.
\begin{lemma}\label{lem:log-covariance}
Let $\left(\eta_1,\eta_2\right)$ be a centered proper complex
Gaussian vector with $\E|\eta_1|^2=\E|\eta_2|^2=1$.
Put $q=|\E\eta_1\overline{\eta_2}|^2$. Then
\[
 \operatorname{Cov}(\log|\eta_1|,\log|\eta_2|)
 =\tfrac14\Li(q),\qquad
 \Li(q)=\sum_{m\ge1}\frac{q^m}{m^2},\quad 0\le q\le1.
\]
\end{lemma}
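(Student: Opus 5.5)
The strategy is to reduce the covariance to an explicit computation via the Laplace (Mellin-type) representation of logarithms, or more concretely via the Taylor expansion of the joint law in the correlation. Let $\gamma=\E\eta_1\overline{\eta_2}$ with $|\gamma|^2=q$. By rotating $\eta_2$ by a unimodular constant, which does not change $|\eta_2|$, we may assume $\gamma=\sqrt q\ge0$. For $q<1$ we represent $\eta_2=\sqrt q\,\eta_1+\sqrt{1-q}\,\zeta$ with $\zeta$ standard proper complex Gaussian independent of $\eta_1$. The endpoint $q=1$ gives $\eta_2=e^{i\theta}\eta_1$ almost surely, so the covariance is $\Var\log|\eta_1|$. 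We compute this directly: $|\eta_1|^2$ is $\mathrm{Exp}(1)$, so $\Var\log|\eta_1|=\tfrac14\Var\log E=\tfrac14\cdot\pi^2/6=\tfrac14\Li(1)$. Alternatively, the case $q=1$ follows from continuity in $q$, once uniform integrability of $(\log|\eta_i|)^2$ is noted.

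For $0\le q<1$, the cleanest route I would take is the expansion of the joint density in Laguerre polynomials. The joint density of $(|\eta_1|^2,|\eta_2|^2)=(s,t)$ for a correlated complex Gaussian pair is the bivariate exponential (Kibble–Moran) density
\[
 p(s,t)=e^{-s-t}\sum_{m\ge0}q^m L_m(s)L_m(t),
\]
which comes from the Hardy–Hille formula after integrating out the phases. This gives
\[
 \operatorname{Cov}(\log s,\log t)=\sum_{m\ge1}q^m\Bigl(\int_0^\infty \log s\,L_m(s)e^{-s}\,ds\Bigr)^2 .
\]
The $m=0$ term cancels against the product of the means. The interchange of sum and integral is justified for $q<1$ by the bound $|L_m(s)|\le e^{s/2}$, which leaves an extra factor $e^{-(s+t)/2}$ and geometric convergence in $m$, together with integrability of $|\log s|e^{-s/2}$.

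The key step, and the one I expect to need the most care, is the coefficient identity
\[
 \int_0^\infty \log s\,L_m(s)e^{-s}\,ds=-\frac1m,\qquad m\ge1 .
\]
I would prove it through the generating function $\sum_m L_m(s)x^m=(1-x)^{-1}e^{-sx/(1-x)}$. Integrating against $e^{-s}\log s$ with $\int_0^\infty e^{-as}\log s\,ds=-(\gamma_E+\log a)/a$ and $a=1/(1-x)$ gives $-(\gamma_E-\log(1-x))=-\gamma_E-\sum_{m\ge1}x^m/m$. Reading off coefficients yields $-1/m$ for $m\ge1$, and the term $-\gamma_E$ for $m=0$ is consistent with the mean.

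With $\log|\eta_i|=\tfrac12\log|\eta_i|^2$, the covariance becomes $\tfrac14\sum_{m\ge1}q^m/m^2=\tfrac14\Li(q)$. Continuity of both sides as $q\to1$, or the direct computation above, covers $q=1$.
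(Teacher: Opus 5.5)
Your argument is correct, but it takes a different route from the paper. The paper never writes down the joint density of $X=|\eta_1|^2$, $Y=|\eta_2|^2$. It computes the joint Laplace transform $\E e^{-sX-tY}=[(1+s)(1+t)-qst]^{-1}$ by a direct Gaussian integral, a formula that remains valid at $q=1$. It then represents $\log$ by truncated Frullani integrals $\int_a^b(e^{-s}-e^{-sx})\,ds/s$ and writes the covariance as the integral of the nonnegative kernel $(st)^{-1}\bigl((P-qst)^{-1}-P^{-1}\bigr)$, where $P=(1+s)(1+t)$. Expanding this kernel geometrically and applying Tonelli together with $L^2$ convergence gives the coefficients $\bigl(\int_0^\infty s^{m-1}(1+s)^{-m-1}\,ds\bigr)^2=1/m^2$.

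The two computations are Laplace duals of each other. Since $\int_0^\infty e^{-(1+a)x}L_m(x)\,dx=a^m/(1+a)^{m+1}$, your Kibble--Moran series transforms term by term into the paper's expansion $\sum_m (qab)^m P^{-m-1}$. This check also verifies your cited density without appealing to Hardy--Hille. Likewise, inserting the Frullani representation into your coefficient and using orthogonality of $L_m$ to $L_0$ gives $\int_0^\infty \log x\,L_m(x)e^{-x}\,dx=-\int_0^\infty s^{m-1}(1+s)^{-m-1}\,ds$.

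The paper's version is self-contained. It needs only a $2\times 2$ Gaussian integral, with no bilinear generating function and no Szeg\H{o} bound $|L_m|\le e^{s/2}$. Positivity of the kernel replaces growth estimates, and $q=1$ is covered by the same computation. You need a separate endpoint argument because the joint density degenerates there.

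Your version is more structural in exchange. For $q<1$ and $f,g\in L^2(e^{-s}\,ds)$, the Laguerre diagonalization gives the Mehler-type formula $\operatorname{Cov}(f(X),g(Y))=\sum_{m\ge1}q^m\hat f_m\hat g_m$, where $\hat f_m=\int_0^\infty f L_m e^{-s}\,ds$. It also identifies the Laguerre coefficients of $\log$ as $-1/m$, which explains directly why the dilogarithm appears.
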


\begin{lemma}\label{lem:cutoff}
Let \(\phi\in C_c^\infty(\C)\) be real. Then
\begin{align}
 \E\mathcal Z(\phi)&=\pi^{-1}\int_\C\phi\,dA,\label{eq:mean}\\
 \Var\mathcal Z(\phi)
 &=\frac1{16\pi^2}\iint_{\C^2}
  \Delta\phi(z)\Delta\phi(w)\Li(e^{-|z-w|^2})\,dA(z)dA(w)
 \le\frac{\zeta(3)}{16\pi}\|\Delta\phi\|_2^2.
 \label{eq:variance}
\end{align}
Here \(\zeta\) denotes the Riemann zeta function, and the $L^2$ norm
is taken with respect to $dA$.
Fix a real $\chi\in C_c^\infty(\C)$ with $0\le\chi\le1$, equal to
one on the unit disk and supported in the disk of radius two.
For $R>0$, put $\chi_R(z)=\chi(z/R)$. As $R\to\infty$, these
functions are eventually equal to one on every fixed compact set, and
\[
 \Var\mathcal Z(\chi_R)
 \le \frac{\zeta(3)}{16\pi R^2}\|\Delta\chi\|_2^2\longrightarrow0.
\]
\end{lemma}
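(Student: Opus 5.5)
We first establish the mean identity \eqref{eq:mean}, then the variance formula and bound \eqref{eq:variance}, and finally the scaling statement for $\chi_R$.

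\emph{Mean and variance formulas.} The starting point is the distributional identity $\mathcal Z=(2\pi)^{-1}\Delta\log|F|$. It holds almost surely because $F$ is entire and not identically zero (Lemma~\ref{lem:local}). Hence $\mathcal Z(\phi)=(2\pi)^{-1}\int\Delta\phi\,\log|F|\,dA$.

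Write $\log|F(z)|=\tfrac12|z|^2+\log|\hat F(z)|$, where $\hat F(z)=e^{-|z|^2/2}F(z)$. Then $\hat F(z)$ is a standard proper complex Gaussian, since $\E|F(z)|^2=e^{|z|^2}$. Consequently $\E\log|\hat F(z)|=c$ is a constant. Since $\int\Delta\phi\,dA=0$, integration by parts gives
\[
\E\mathcal Z(\phi)=(2\pi)^{-1}\int\Delta\phi\,\tfrac12|z|^2\,dA=(4\pi)^{-1}\int\phi\,\Delta|z|^2\,dA=\pi^{-1}\int\phi\,dA.
\]
Exchanging expectation and integral is justified by Fubini, because $\E|\log|\hat F(z)||$ is uniformly bounded.

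For the variance, the deterministic term and the constant $c$ drop out. Thus
\[
\Var\mathcal Z(\phi)=(2\pi)^{-2}\iint\Delta\phi(z)\Delta\phi(w)\operatorname{Cov}(\log|\hat F(z)|,\log|\hat F(w)|)\,dA\,dA.
\]
Fubini applies here as well, since second moments of $\log|\hat F|$ are uniformly bounded by Cauchy--Schwarz and a Gaussian log-moment bound. We have $|\E\hat F(z)\overline{\hat F(w)}|^2=e^{-|z|^2-|w|^2}|e^{z\bar w}|^2=e^{-|z-w|^2}$. Lemma~\ref{lem:log-covariance} then gives the covariance $\tfrac14\Li(e^{-|z-w|^2})$, which yields the stated formula with constant $1/(16\pi^2)$.

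\emph{The bound.} We view the double integral as $\langle\Delta\phi,\Delta\phi*K\rangle$ with kernel $K(u)=\Li(e^{-|u|^2})\ge0$. Young's inequality gives $|\langle f,f*K\rangle|\le\|K\|_1\|f\|_2^2$. The kernel norm is computed termwise by monotone convergence:
\[
\|K\|_1=\sum_m m^{-2}\int e^{-m|u|^2}dA=\sum_m \pi m^{-3}=\pi\zeta(3).
\]
Multiplying by $1/(16\pi^2)$ gives $\zeta(3)/(16\pi)$.

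\emph{Cutoffs.} For $R\ge M$, $\chi_R=1$ on the disk of radius $M$, so the cutoffs are eventually one on every compact set. By the chain rule $\Delta\chi_R(z)=R^{-2}(\Delta\chi)(z/R)$. Changing variables gives
\[
\|\Delta\chi_R\|_2^2=R^{-4}\cdot R^2\|\Delta\chi\|_2^2=R^{-2}\|\Delta\chi\|_2^2.
\]
Inserting this into \eqref{eq:variance} gives the stated bound, which tends to zero as $R\to\infty$.

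The main obstacle is justifying the exchange of expectation with the random distributional pairing. This needs the uniform moment bounds on $\log|\hat F(z)|$ noted above. The singularities of $\log|\hat F|$ at zeros are integrable, and the pairing holds pathwise.
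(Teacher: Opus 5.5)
Your proposal is correct and follows essentially the same route as the paper. Both arguments use the pathwise identity $\mathcal Z=(2\pi)^{-1}\Delta\log|F|$ and normalize by $e^{-|z|^2/2}$, removing the constant $\E\log|\eta|$ and the term $|z|^2/2$ via $\int\Delta\phi\,dA=0$ and $\Delta|z|^2=4$. Both then apply the logarithmic covariance lemma at $q=e^{-|z-w|^2}$, with Fubini justified by $z$-independent second moments, and finish with Young's inequality using $\|Q\|_1=\pi\zeta(3)$ and the scaling $\|\Delta\chi_R\|_2^2=R^{-2}\|\Delta\chi\|_2^2$.
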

\begin{proof}
Local factorization of a nonzero analytic function and
\(\Delta\log|z|=2\pi\delta_0\) give
\(\mathcal Z=(2\pi)^{-1}\Delta\log|F|\) distributionally.
The covariance kernel of $F$ is \(e^{z\bar w}\), so
$\E|F(z)|^2=e^{|z|^2}$. Thus
\(\widetilde F(z)=e^{-|z|^2/2}F(z)\) has standard proper complex
Gaussian marginals, and the squared modulus of its correlation at
$z,w$ is \(e^{-|z-w|^2}\).
Let $\eta$ be a standard centered proper complex Gaussian variable,
and put \(L(z)=\log|\widetilde F(z)|-\E\log|\eta|\).
Almost surely, $\log|\widetilde F|$ is locally integrable.
Moreover, $\E|L(z)|^2$ is finite and independent of $z$.
We may therefore integrate $L$ against $\Delta\phi$
both distributionally and in $L^2$.
Since $\Delta(|z|^2/2)=2$ and $\int\Delta\phi=0$, we obtain
\[
 \mathcal Z(\phi)-\pi^{-1}\int\phi
 =\frac1{2\pi}\int_\C L(z)\Delta\phi(z)\,dA(z).
\]
Taking expectations gives \eqref{eq:mean}. Applying Fubini's theorem
and Lemma~\ref{lem:log-covariance} to the second moment gives the
equality in \eqref{eq:variance}.
Set $Q(u)=\Li(e^{-|u|^2})$ and $g=\Delta\phi$.
The double integral in \eqref{eq:variance} is
$\int g(Q*g)\,dA$. Tonelli's theorem, Cauchy--Schwarz and
Young's inequality give
\begin{align*}
 \|Q\|_1
 &=\sum_{m\ge1}\frac1{m^2}
     \int_\C e^{-m|u|^2}\,dA(u)=\pi\zeta(3),\\
 \left|\int g(Q*g)\,dA\right|
 &\le\|g\|_2\|Q*g\|_2
 \le\pi\zeta(3)\|g\|_2^2.
\end{align*}
This proves the variance bound, while
$\|\Delta\chi_R\|_2^2=R^{-2}\|\Delta\chi\|_2^2$
gives the cutoff estimate.
\end{proof}
\section{The marginal identities for GEF zeros}\label{sec:hierarchy}
\begin{proof}[Proof of Theorem~\ref{thm:GEF}]
By Lemmas~\ref{lem:local} and~\ref{lem:decay}, the GEF zero
process is simple, has all local moments, and has continuous
factorial correlation densities. For every compact $K$, the common
integrable majorant \eqref{eq:compact-majorant} and the finite
volume of $K^{k-1}$ give the absolute integrability required in
Theorem~\ref{thm:abstract}. Applying the theorem with the cutoffs
from Lemma~\ref{lem:cutoff} gives \eqref{eq:GEF-sum-rule}
at every distinct fixed tuple.

Lemma~\ref{lem:decay} gives continuity of the marginal integral
also at collisions, and Lemma~\ref{lem:local} gives continuity of
$\rho_{k-1}^T$. Taking limits along distinct tuples proves the
identity at every fixed tuple.
\end{proof}

\begin{proof}[Proof of Corollary~\ref{cor:iterated}]
Write $Z=(z_1,\ldots,z_k)$ and $a_k=1/[8(k-1)^2]$.
The diameter comparison and Lemma~\ref{lem:decay} give
\begin{align*}
 \diam(Z)^2&\ge\frac1{k-1}\sum_{i=2}^k|z_i-z_1|^2,\\
 |\rho_k^T(Z)|&\le A_k
     \prod_{i=2}^k\exp\left[-\frac{a_k}{k-1}|z_i-z_1|^2\right].
\end{align*}
For fixed $z_1,\ldots,z_r$, the Gaussian factors in the remaining
coordinates are jointly integrable. The multiple integral is
therefore absolutely convergent. Fubini's theorem and successive
applications of \eqref{eq:GEF-sum-rule} give the factors
$-(k-1),-(k-2),\ldots,-r$, whose product is
$(-1)^{k-r}(k-1)!/(r-1)!$. The final assertion follows from
$\rho_1^T=\rho_1=1/\pi$, by \eqref{eq:mean}.
\end{proof}

\begin{remark}
At order two, Theorem~\ref{thm:GEF} recovers the known normalization
\[
 \int_\C\rho_2^T(z,w)\,dA(w)=-1/\pi.
\]
The cutoffs used in the proof vary smoothly across an annulus whose
width is comparable to its radius. The variance bound
\eqref{eq:variance} applies to these smooth functions; it does not
apply directly to sharp disk indicators.
\end{remark}

\appendix
\section{The Gaussian logarithmic covariance formula}\label{app:log-covariance}
\begin{proof}[Proof of Lemma~\ref{lem:log-covariance}]
Put $X=|\eta_1|^2$ and $Y=|\eta_2|^2$. For $q<1$, integration
against the proper complex Gaussian density gives, for $s,t\ge0$,
\[
 \E e^{-sX-tY}=[(1+s)(1+t)-qst]^{-1}.
\]
For $q=1$, equality in Cauchy--Schwarz makes the Gaussian
variables equal up to a constant phase, so $X=Y$ almost surely.
Since $X$ is exponential with mean one,
$\E e^{-(s+t)X}=(1+s+t)^{-1}$, giving the same formula.

For $x>0$ and $0<a<b<\infty$, put
$L_{a,b}(x)=\int_a^b(e^{-s}-e^{-sx})\,ds/s$.
Frullani's formula gives $L_{a,b}(x)\to\log x$ as
$a\downarrow0$ and $b\uparrow\infty$ through nested intervals.
For each fixed $x$, the integrand has constant sign as $s$ varies,
so $|L_{a,b}(x)|\le|\log x|$.
Exponential marginals have square-integrable logarithms; hence
$L_{a,b}(X)\to\log X$ and $L_{a,b}(Y)\to\log Y$ in $L^2$.

For finite $a,b$, Fubini expresses
$\operatorname{Cov}(L_{a,b}(X),L_{a,b}(Y))$ as the integral over
$[a,b]^2$ of
\[
 \frac1{st}\left(\frac1{P-qst}-\frac1P\right),
 \qquad P=(1+s)(1+t).
\]
This kernel is nonnegative. Expanding it using $0\le qst/P<1$
and letting the intervals increase to $(0,\infty)$, Tonelli and
the $L^2$ convergence give
\[
 \operatorname{Cov}(\log X,\log Y)
 =\sum_{m\ge1}q^m
   \left(\int_0^\infty\frac{s^{m-1}}{(1+s)^{m+1}}\,ds\right)^2
 =\sum_{m\ge1}\frac{q^m}{m^2}.
\]
The substitution $u=s/(1+s)$ evaluates the integral as $1/m$.
Since $\log|\eta_1|=\tfrac12\log X$ and
$\log|\eta_2|=\tfrac12\log Y$, the required covariance is one
quarter of this expression, for all $0\le q\le1$.
\end{proof}
\section*{Statements and Declarations}

\medskip
\noindent\textbf{AI Statement.}
The authors used artificial-intelligence tools for language editing,
\LaTeX\ formatting, and limited assistance with local mathematical
reasoning.  The proof strategy and mathematical development are the
authors' own; all mathematical content was independently written and checked by the
authors, who take full responsibility for it.

\end{document}